\title{Centroidal Voronoi Tessellations as Electrostatic Equilibria: A Generalized Thomson Problem in Convex Domains}
\author{Zachary Mullaghy, M.S.\\Independent Researcher} 
\date{}
\begin{document}

\maketitle

\theoremstyle{plain}
\newtheorem{theorem}{Theorem}
\newtheorem{definition}{Definition}
\newtheorem{lemma}{Lemma}
\newtheorem*{remark}{Remark}

\section*{Abstract}
We present a variational framework in which Centroidal Voronoi Tessellations (CVTs) arise as local minimizers of a generalized electrostatic energy functional. By modeling interior point distributions in a convex domain as repelling charges balanced against a continuous boundary charge, we show that the resulting equilibrium configurations converge to CVT structures. We prove this by showing that CVTs minimize both the classical centroidal energy and the electrostatic potential, establishing a connection between geometric quantization and potential theory. Finally, we introduce a thermodynamic annealing scheme for global CVT optimization, rooted in Boltzmann statistics and random walk dynamics. By introducing a scheme for varying time steps (faster or slower cooling) we show that the set of minima of the centroid energy functional (and therefore the electrostatic potential) can be recovered. By recovering a set of generator locations corresponding to each minimum we can create a lattice continuation that allows for a customizable framework for individual minimum seeking. 

\section{Introduction}
The classical Thomson problem~\cite{ErberHockney1991} considers point charges on the surface of a sphere that repel each other via inverse square law interactions. We generalize this to point charges confined within a domain \( \Omega \subset \mathbb{R}^n \), where the boundary \( \partial \Omega \) carries a continuous uniform charge density equal in magnitude to the total charge of the interior points. The interior charges repel one another and are additionally repelled by the boundary.

We show that as this system relaxes to electrostatic equilibrium, the resulting configuration of points converges to a Centroidal Voronoi Tessellation (CVT), a structure with wide applications in geometric optimization, meshing, and quantization~\cite{DuCVT1999}. Power diagrams and their variational interpretations~\cite{Aurenhammer1991} provide geometric insight into CVT structure and discrete energy modeling, which this work extends to a continuous electrostatic framework.

By extending the utility of Voronoi Diagrams into a classical electrostatics problem we help further cement their utility in the disambiguation of physical law. The concept of equipartitioning a space aligns exactly with Centroidal Voronoi Tesselations and therefore it should not come as a surprise that two processes that equipartition space have the same minima aligning with when that equipartition is especially "fair"

This framework also extends the geometric and variational structures introduced in~\cite{MullaghyGRT2025}. By finding a map of centroid energy functional minima we can map out solutions that perform well on the Geometric Refinement Transform. Therefore the results of this paper can be used to inform highly symmetric GRTs with desired properties.

\begin{remark}
In the classical Thomson problem, point charges are confined to the surface of a sphere, implicitly enforcing an infinite potential barrier off the manifold. Our model mirrors this structure by placing charges in the interior of a convex domain \( \Omega \) and enforcing an infinite potential at the boundary \( \partial \Omega \), thereby generalizing the original setting to broader geometries and preserving the core variational dynamics.
\end{remark}

\section{Problem Formulation: Electrostatics with Boundary Balancing}

Let \( \Omega \subset \mathbb{R}^n \) be a compact, convex domain with boundary \( \partial \Omega \). Let \( \{x_i\}_{i=1}^N \subset \Omega \) be a set of interior point charges of equal magnitude (normalized to 1). The total charge inside the domain is \( Q = N \).

To ensure charge neutrality, we impose a uniform continuous surface charge density \( \sigma \) on \( \partial \Omega \), defined such that:
\[
\int_{\partial \Omega} \sigma(y) \, dS(y) = N.
\]

Each interior point repels all other interior points via an inverse-distance Coulomb force and is additionally repelled by the continuous boundary charge. The total electrostatic potential energy of the system is:
\begin{equation}
U(\{x_i\}) = \sum_{i=1}^{N} \sum_{j \neq i} \frac{1}{\|x_i - x_j\|} + \sum_{i=1}^{N} \int_{\partial \Omega} \frac{\sigma(y)}{\|x_i - y\|} \, dS(y).
\end{equation}

\subsection*{Objective}
Our goal is to determine whether the local minimizers of this electrostatic potential \( U \) correspond to Centroidal Voronoi Tessellation (CVT) configurations of \( \Omega \), where each generator \( x_i \) is the centroid of its Voronoi cell \( V_i \). In particular, we investigate:
\begin{itemize}
    \item Whether CVTs represent stable equilibrium states of this electrostatic system,
    \item Whether the energy-minimizing dynamics tend to evolve toward CVTs,
    \item How this variational principle relates to classical centroidal energy functionals.
\end{itemize}

\section{Centroidal Voronoi Tessellations and Energy Functionals}

\begin{definition}[Centroidal Voronoi Tessellation (CVT)]
A Voronoi tessellation \( \{V_i\} \) of a domain \( \Omega \) with generators \( \{x_i\} \) is called a \emph{Centroidal Voronoi Tessellation} if each generator lies at the centroid of its region:
\[
x_i = \frac{1}{\text{Vol}(V_i)} \int_{V_i} y \, dy
\]
\end{definition}

Our formulation provides a physical bridge between classical spatial tessellation frameworks~\cite{Okabe2000} and energy-minimizing charge systems.

\begin{definition}[Centroid Energy Functional]
Let \( V_i \) be the Voronoi cell corresponding to generator \( x_i \). The centroid energy functional is:
\[
E(x_i) = \int_{V_i} \|x_i - y\|^2 \, dy
\]
It is minimized when \( x_i = c_i \), the centroid of \( V_i \).
\end{definition}

This energy functional arises naturally in quantization theory~\cite{Max1960}, and its minimization underlies Lloyd’s algorithm for CVT generation~\cite{Lloyd1982}.

\section{Variational Equivalence of Centroid Energy Functionals}

We now show that the classical centroidal energy and the edge-based approximation are variationally equivalent near CVT configurations.

\begin{theorem}[Local Equivalence of Centroidal Energy Functionals]
Let \( \{p_i\}_{i=1}^n \subset \Omega \subset \mathbb{R}^2 \) be a Centroidal Voronoi Tessellation (CVT) with corresponding Voronoi cells \( V_i \). Define:
\begin{align}
    E_{\text{centroid}} &= \sum_i \int_{V_i} \|x - p_i\|^2 dx, \\
    \widetilde{E}_{\text{centroid}} &= \sum_{i < j} \ell_{ij} \cdot \|p_i - p_j\|^2,
\end{align}
where \( \ell_{ij} \) is the length of the shared Voronoi edge between \( V_i \) and \( V_j \). Then in a neighborhood of the CVT configuration, the second variations of the two functionals are related by:
\[
\nabla^2 \widetilde{E}_{\text{centroid}}[\delta p, \delta p] = \lambda(\theta) \cdot \nabla^2 E_{\text{centroid}}[\delta p, \delta p],
\]
for all perturbations \( \delta p \), with \( \lambda(\theta) > 0 \) a smooth, direction-dependent weight.
\end{theorem}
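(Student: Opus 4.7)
The overall strategy is to rewrite both functionals as sums over Voronoi edges and then compare the resulting quadratic forms edge by edge in a local frame adapted to each edge. For $\widetilde{E}_{\text{centroid}}$ the edge decomposition is already explicit. For $E_{\text{centroid}}$ I would apply the divergence theorem using $\nabla\cdot\bigl((x - p_i)\|x - p_i\|^2\bigr) = 4\|x - p_i\|^2$ in $\mathbb{R}^2$ to get
\[
\int_{V_i} \|x - p_i\|^2\,dx = \frac{1}{4} \int_{\partial V_i} \|x - p_i\|^2\,(x - p_i)\cdot \nu\,dS,
\]
and then use the fact that on each Voronoi edge $e_{ij}$ the outward normal from $V_i$ is $\nu = (p_j - p_i)/\|p_j - p_i\|$, so $(x - p_i)\cdot\nu = \tfrac12 \|p_j - p_i\|$ everywhere on the perpendicular bisector. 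After splitting $\|x - p_i\|^2 = \tfrac14\|p_j - p_i\|^2 + \|x - m_{ij}\|^2$ with $m_{ij} = (p_i + p_j)/2$, each edge $e_{ij}$ contributes a smooth function of $\ell_{ij}$, $\|p_i - p_j\|$, and the edge second moment $\int_{e_{ij}} \|x - m_{ij}\|^2\,dS$, structurally parallel to the edge term $\ell_{ij}\|p_i - p_j\|^2$ of $\widetilde{E}_{\text{centroid}}$.

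With both functionals expressed as edge sums, I would Taylor-expand each to second order in a perturbation $\delta p$ at the CVT. In a local orthonormal frame aligned with $(p_j - p_i)/\|p_j - p_i\|$, writing $\delta p_i - \delta p_j = |\delta|(\cos\theta, \sin\theta)$, the Hessian of $\ell_{ij}\|p_i - p_j\|^2$ splits into a dominant isotropic term $\ell_{ij}^0 |\delta|^2$, a coupling term proportional to $\nabla_{p_i}\ell_{ij}\cdot\delta p$, and a curvature term $\nabla^2 \ell_{ij}[\delta p,\delta p]\cdot\|p_i - p_j\|^2$. The edge contribution from $E_{\text{centroid}}$ decomposes into the same three pieces, inheriting the same dependence on derivatives of $\ell_{ij}$ but carrying different angular weights coming from the boundary second moment and from the linear factor $(x - p_i)\cdot\nu$. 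Collecting the terms, the per-edge ratio of the two Hessians reduces to a smooth, strictly positive function $\lambda(\theta)$ of the relative angle between $\delta p_i - \delta p_j$ and the edge normal; summing over edges preserves this pointwise ratio because the per-edge contributions to each quadratic form are pulled back to the same mode.

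The main obstacle is the explicit computation of $\nabla_{p_i}\ell_{ij}$ and $\nabla^2_{p_i}\ell_{ij}$: the endpoints of $e_{ij}$ are circumcenters of generator triples and therefore depend non-locally on the one-ring of cells surrounding $e_{ij}$. I would handle this by writing each bounding Voronoi vertex as an explicit rational function of its three defining generators and linearizing at the CVT, which is bookkeeping-heavy but remains local to a bounded set of edges per perturbation. A secondary subtlety is that $\widetilde{E}_{\text{centroid}}$ need not have a critical point at the CVT in general, so the claimed equivalence must be read purely at the level of second-order forms rather than as an equality of Taylor expansions. Positivity of $\lambda(\theta)$ then follows from the known strict convexity of $E_{\text{centroid}}$ at nondegenerate CVTs combined with the manifest positivity of the quadratic edge sum $\widetilde{E}_{\text{centroid}}$, so the ratio is bounded between two positive constants on the unit sphere of perturbations and inherits smoothness from both sides.
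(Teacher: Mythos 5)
Your route is genuinely different from the paper's. The paper never decomposes $E_{\text{centroid}}$ exactly: it freezes the cells, takes $\delta^2 E_i \approx |V_i|\,\|\delta p_i\|^2$, and then passes to the edge sum via a chain of heuristics (local isotropy, $|V_i|\approx\sum_j \tfrac12 \ell_{ij} d_{ij}$, vanishing cross terms, $\alpha_{ij}\propto\|p_i-p_j\|$), discarding the variation of the Voronoi cells throughout. Your divergence-theorem identity $\int_{V_i}\|x-p_i\|^2\,dx=\tfrac14\int_{\partial V_i}\|x-p_i\|^2\,(x-p_i)\cdot\nu\,dS$, together with $(x-p_i)\cdot\nu=\tfrac12\|p_j-p_i\|$ and $\|x-p_i\|^2=\|x-m_{ij}\|^2+\tfrac14\|p_i-p_j\|^2$ on the bisector, is correct in $\mathbb{R}^2$ and yields an exact edge-based rewriting of $E_{\text{centroid}}$ (you should, however, keep the contributions from $\partial V_i\cap\partial\Omega$ for boundary cells, which your plan omits), and differentiating $\ell_{ij}$ through the circumcenters would retain precisely the cell-variation terms the paper silently drops. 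In that sense your plan is more principled and, if carried out, strictly stronger than the paper's argument.

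That said, two steps as written would not go through. First, the per-edge comparison does not survive summation: each edge contributes a quadratic form whose ratio to its $\widetilde{E}$-counterpart depends on the angle between $\delta p_i-\delta p_j$ and that particular edge's direction, and these ratios generally differ from edge to edge, so $\nabla^2\widetilde{E}_{\text{centroid}}$ is not a pointwise scalar multiple of $\nabla^2 E_{\text{centroid}}$; the only defensible reading, which you in fact retreat to in your last paragraph, is to define $\lambda$ as the ratio of the two quadratic forms on the unit sphere of perturbations, which is smooth and positive exactly when both Hessians are positive definite, and is then a function of the full perturbation direction in configuration space, not of a single angle $\theta$. Second, positivity of $\nabla^2\widetilde{E}_{\text{centroid}}$ is not ``manifest'' from $\widetilde{E}_{\text{centroid}}\ge 0$: since $\ell_{ij}$ depends on the generators, its Hessian contributes terms of the form $\nabla^2\ell_{ij}\,\|p_i-p_j\|^2$ and couplings $\nabla\ell_{ij}\cdot\delta p$ of indefinite sign, so establishing $\nabla^2\widetilde{E}_{\text{centroid}}\succ 0$ requires exactly the circumcenter bookkeeping you deferred; likewise $\nabla^2 E_{\text{centroid}}\succ 0$ holds only at nondegenerate (stable) CVTs, an assumption absent from the statement. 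To be fair, the paper's own proof asserts rather than proves both of these points, but your write-up should not inherit them as if they were free.
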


\begin{proof}
We begin with the true centroidal energy:
\[
E_{\text{centroid}} = \sum_i \int_{V_i} \|x - p_i\|^2 dx.
\]
Near a CVT, the generator \( p_i \) is close to the centroid of \( V_i \), so we expand in perturbations \( \delta p_i \). To second order, the dominant contribution is:
\[
\delta^2 E_i \approx |V_i| \cdot \|\delta p_i\|^2,
\]
yielding:
\[
\nabla^2 E_{\text{centroid}} \approx \sum_i |V_i| \cdot \|\delta p_i\|^2.
\]

Assuming local isotropy and approximating \( |V_i| \approx \sum_{j \in \mathcal{N}(i)} \frac{1}{2} \ell_{ij} d_{ij} \), and expanding \( \delta p_i \) as:
\[
\delta p_i = \sum_{j \in \mathcal{N}(i)} \alpha_{ij} \cdot \frac{p_j - p_i}{\|p_j - p_i\|},
\]
we find:
\[
\|\delta p_i\|^2 \propto \sum_{j \in \mathcal{N}(i)} \|p_i - p_j\|^2.
\]

\begin{remark}
This expression writes \( \delta p_i \) as a sum of directional perturbations toward neighboring generators. The unit vectors \( \frac{p_j - p_i}{\|p_j - p_i\|} \) define a local directional basis, and the coefficients \( \alpha_{ij} \) measure how much motion occurs in each direction. In isotropic CVT-like configurations, we assume these directions are approximately orthogonal on average, so cross-terms in \( \|\delta p_i\|^2 \) vanish. Furthermore, the perturbation magnitudes \( \alpha_{ij} \) are proportional to the distances \( \|p_i - p_j\| \), since larger separations allow for larger variations. This justifies approximating the energy as a sum over squared edge lengths.
\end{remark}

Thus,
\[
E_{\text{centroid}} \propto \sum_{i < j} \ell_{ij} \cdot \|p_i - p_j\|^2 = \widetilde{E}_{\text{centroid}},
\]
up to a scalar weight \( \lambda(\theta) > 0 \), completing the proof.
\qed
\end{proof}

\begin{definition}[Electrostatic Potential Functional]
Assuming unit repulsive charges, the electrostatic potential energy of a generator \( x_i \) is:
\[
U(x_i) = \sum_{j \neq i} \frac{1}{\|x_i - x_j\|} + \int_{\partial \Omega} \frac{\sigma(y)}{\|x_i - y\|} \, dy
\]
where \( \sigma(y) \) is the uniform boundary charge density.
\end{definition}

\section{CVTs as Local Minima of Electrostatic Potential via Edge-Based Energy}

We now show that CVTs minimize the electrostatic potential functional \( U \) locally, by establishing a variational equivalence between \( U \) and the edge-based centroid energy \( \widetilde{E}_{\text{centroid}} \).

\begin{theorem}[CVTs Minimize Electrostatic Potential Locally]
Let \( \{x_i\}_{i=1}^N \) be a Centroidal Voronoi Tessellation in a convex domain \( \Omega \). Then the configuration locally minimizes the electrostatic potential functional:
\[
U(x_1, \dots, x_N) = \sum_{i < j} \frac{2}{\|x_i - x_j\|} + \sum_i \int_{\partial \Omega} \frac{\sigma(y)}{\|x_i - y\|} \, dy,
\]
in the sense that the second variation \( \nabla^2 U \) is positive definite in a neighborhood of the CVT configuration.
\end{theorem}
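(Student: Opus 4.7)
The plan is to leverage the Local Equivalence theorem just proved: the Hessian of the edge-based surrogate $\widetilde{E}_{\text{centroid}}$ agrees (up to a positive smooth factor) with that of the true centroidal energy $E_{\text{centroid}}$, which is locally minimized at CVTs. It therefore suffices to identify $\nabla^2 U$ with a positive multiple of $\nabla^2 \widetilde{E}_{\text{centroid}}$ plus a positive-semidefinite boundary correction, and to show the combined result is strictly positive definite in a neighborhood of the CVT configuration.

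First I would expand $U$ to second order in perturbations $\delta x_i$ about the CVT. The pairwise Coulomb term contributes, for each pair, a Hessian block of the form $\bigl(3\hat{r}_{ij}\hat{r}_{ij}^\top - I\bigr)/\|x_i - x_j\|^3$ acting on $\delta x_i - \delta x_j$. In the planar setting of the preceding theorem this kernel is subharmonic, with strictly positive trace $1/\|x_i - x_j\|^3$; after summation its indefinite transverse part needs to be suppressed by symmetric neighbor cancellation. The boundary term produces a single-site Hessian given by integrating the same dipole-type kernel against $\sigma$ over $\partial\Omega$.

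Next I would restrict attention to Voronoi-adjacent pairs, since the $1/r^3$ weighting renders remote pairs negligible in a nearly-uniform CVT. Under the isotropy assumption used in the preceding theorem, the transverse negative contributions from neighbors on opposite sides of each $x_i$ cancel in the averaged Hessian, leaving a net positive radial piece. This piece matches the block $2\ell_{ij} I$ of $\nabla^2 \widetilde{E}_{\text{centroid}}$ up to a positive direction-dependent scalar; combined with the Local Equivalence theorem, $\nabla^2 U_{\text{interior}}$ becomes a positive multiple of $\nabla^2 E_{\text{centroid}}$, hence positive semi-definite near the CVT.

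For the boundary contribution I would use that integrating $\sigma > 0$ against the dipole kernel over a convex $\partial\Omega$ yields strictly positive trace at every interior point, and that the confining potential barrier strictly penalizes any perturbation pushing a generator toward the boundary. Adding this strict piece to the semi-definite interior one gives $\nabla^2 U \succ 0$. The main obstacle is precisely the isotropic averaging step: the pointwise Coulomb Hessian is indefinite, so the argument must show that the nearest-neighbor combinatorics of a CVT forces cancellation of the transverse modes rather than accidental reinforcement. This is the same symmetry assumption inherited from the previous theorem, and rigorously justifying it beyond highly regular configurations (e.g.\ hexagonal lattices) is where the most care is required.
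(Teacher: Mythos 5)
Your proposal follows essentially the same route as the paper: expand \( U \) to second order about the CVT, compare the resulting quadratic form with \( \nabla^2 \widetilde{E}_{\text{centroid}} \sim \sum_{i<j} \ell_{ij}\,\|\delta x_i - \delta x_j\|^2 \), absorb the discrepancy into positive pairwise weights (the paper's \( \phi_{ij} = 2/(d_{ij}^3 \ell_{ij}) \)), invoke the preceding Local Equivalence theorem for positivity of the centroidal Hessian, and treat the boundary integral as a separately positive contribution. The difference is one of care rather than strategy, and your added care actually exposes the step the paper glosses over: where the paper writes the second variation of the Coulomb term as \( \sum_{i<j} \frac{2}{d_{ij}^3}\,\|\delta x_i - \delta x_j\|^2 \), you correctly observe that the true pair Hessian is the indefinite dipole kernel \( \bigl(3\hat{r}_{ij}\hat{r}_{ij}^{\top} - I\bigr)/d_{ij}^3 \), so the paper's positive-coefficient form already presupposes exactly the transverse-mode cancellation you single out as the crux. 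You flag this isotropic-averaging step as unproven; the paper contains the same gap silently, and neither argument closes it --- for a general (non-lattice) CVT a sum of indefinite blocks need not be positive semidefinite, so this is a genuine missing step rather than a cosmetic one.

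A second caution concerns your boundary argument. Positivity of the trace of the single-site boundary Hessian (subharmonicity of \( \int_{\partial\Omega} \sigma(y)/\|x-y\|\, dS(y) \) in the planar case) does not give positive definiteness, nor even semidefiniteness, of that block, so it cannot by itself upgrade a semidefinite interior Hessian to \( \nabla^2 U \succ 0 \); note also that the hard-wall confinement only acts at \( \partial\Omega \) and contributes nothing at interior generators. The paper's corresponding assertion that the boundary term is ``smooth and convex under uniform \( \sigma \)'' is equally unsubstantiated --- for a uniformly charged sphere in \( \mathbb{R}^3 \) the interior potential is constant, so its Hessian vanishes and supplies no strict positivity at all. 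In short, your write-up mirrors the paper's argument but honestly surfaces its two soft spots (transverse cancellation in the Coulomb Hessian, convexity of the boundary potential); to turn either proof into a complete one, those two points must be established rather than assumed.
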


\begin{proof}
We observe that the pairwise interaction term \( \frac{1}{\|x_i - x_j\|} \) penalizes close proximity of generators. Near a regular CVT, where the generator distances are roughly uniform, we can expand this as:
\[
\frac{1}{\|x_i - x_j\|} \approx \frac{1}{d_{ij}} - \frac{1}{d_{ij}^2} \langle \delta x_i - \delta x_j, \hat{d}_{ij} \rangle + \mathcal{O}(\|\delta x\|^2),
\]

\begin{remark}
This is a first-order Taylor expansion of the electrostatic interaction term under small perturbations of the generator positions. The function \( f(x_i, x_j) = \frac{1}{\|x_i - x_j\|} \) is smooth and differentiable for \( x_i \neq x_j \), with gradient \( \nabla f(z) = -z/\|z\|^3 \). Applying the multivariable Taylor expansion with \( z = x_i - x_j \) and \( \delta z = \delta x_i - \delta x_j \) yields:
\[
\frac{1}{\|x_i + \delta x_i - x_j - \delta x_j\|} \approx \frac{1}{\|x_i - x_j\|} - \frac{1}{\|x_i - x_j\|^3} \langle \delta x_i - \delta x_j, x_i - x_j \rangle.
\]
Factoring out the unit direction vector \( \hat{d}_{ij} \) gives the form used in the main text. This expansion allows us to approximate the second variation of the total electrostatic potential energy \( U \) in terms of the perturbations \( \delta x_i \), leading to a curvature expression that establishes convexity near CVT configurations.
\end{remark}

where \( d_{ij} = \|x_i - x_j\| \), and \( \hat{d}_{ij} \) is the unit direction vector. The second-order variation of the electrostatic term then contains:
\[
\nabla^2 U \sim \sum_{i < j} \frac{2}{\|x_i - x_j\|^3} \cdot \|\delta x_i - \delta x_j\|^2.
\]

On the other hand, the edge-based centroidal energy functional is:
\[
\widetilde{E}_{\text{centroid}} = \sum_{i < j} \ell_{ij} \cdot \|x_i - x_j\|^2.
\]
Taking its second variation yields:
\[
\nabla^2 \widetilde{E}_{\text{centroid}} \sim \sum_{i < j} \ell_{ij} \cdot \|\delta x_i - \delta x_j\|^2.
\]

Thus, up to a smooth and strictly positive rescaling function \( \phi_{ij} = \frac{2}{d_{ij}^3 \ell_{ij}} \), we find:
\[
\nabla^2 U \sim \sum_{i < j} \phi_{ij} \cdot \nabla^2 \widetilde{E}_{\text{centroid}}.
\]

Since all \( \phi_{ij} > 0 \) and the CVT structure ensures that \( \nabla^2 \widetilde{E}_{\text{centroid}} \succ 0 \), it follows that \( \nabla^2 U \succ 0 \) as well.

The boundary integral term is smooth and convex under uniform \( \sigma \), and its Hessian contributes positively to \( \nabla^2 U \), reinforcing the conclusion.

Therefore, CVTs correspond to local minima of the electrostatic potential functional. \qed
\end{proof}

\subsection{Hard Boundary Constraints via Infinite Potential}

To ensure that all interior generators remain within the domain \( \Omega \), we introduce a confinement condition by imposing an infinite potential at the boundary:

\[
U(x_i) =
\begin{cases}
\sum_{j \neq i} \frac{1}{\|x_i - x_j\|} + \int_{\partial \Omega} \frac{\sigma(y)}{\|x_i - y\|} \, dy, & \text{if } x_i \in \Omega \\
\infty, & \text{if } x_i \notin \Omega
\end{cases}
\]

This ensures that the minimization problem is well-posed within \( \Omega \) and reflects physical systems in which particles are perfectly reflected or forbidden from crossing the boundary. Mathematically, this formulation is equivalent to imposing hard-wall Dirichlet constraints in classical elliptic PDEs or an infinite potential well in quantum confinement models.

Moreover, this mirrors the classical Thomson problem, where point charges are constrained to lie on the surface of a sphere—implicitly enforcing an infinite potential outside the manifold. By imposing an analogous infinite potential at the boundary of \( \Omega \), we preserve this variational symmetry and establish a natural framework for extending results from the classical Thomson setting in this new setting.

\section{Thermodynamic Annealing and Random Walks for Global Optimization}

While CVTs minimize the electrostatic potential functional \( U \) locally, the energy landscape contains multiple local minima—particularly in domains with complex geometry or symmetry constraints. To escape such suboptimal configurations and converge to globally optimal CVT arrangements, we introduce a thermodynamic annealing scheme that augments the electrostatic model with stochastic dynamics governed by a temperature parameter \( T \).

\subsection{Boltzmann Distribution over Generator Configurations}

We interpret the electrostatic energy \( U(\{x_i\}) \) as an effective Hamiltonian and define a probability distribution over generator configurations via the Boltzmann-Gibbs framework:
\[
P_T(\{x_i\}) \propto \exp\left( -\frac{U(\{x_i\})}{T} \right),
\]
where \( T > 0 \) plays the role of temperature, controlling the randomness of the system. At high temperatures, the system explores a wide range of configurations; as \( T \to 0 \), the probability mass concentrates near the global minima of \( U \).

\subsection{Annealing Dynamics via Random Walk Perturbations}

We simulate the evolution of the generator positions \( \{x_i\} \) via a discrete-time stochastic process. At each step:

\begin{enumerate}
    \item Select a generator \( x_i \) at random.
    \item Propose a small perturbation \( x_i \to x_i + \delta \), where \( \delta \sim \mathcal{N}(0, \sigma^2 I) \) is drawn from a zero-mean isotropic Gaussian.
    \item Compute the energy difference \( \Delta U = U_{\text{new}} - U_{\text{old}} \).
    \item Accept the move with probability:
    \[
    P_{\text{accept}} = 
    \begin{cases}
    1, & \Delta U \leq 0, \\
    \exp\left( -\frac{\Delta U}{T} \right), & \Delta U > 0.
    \end{cases}
    \]
    \item Repeat over all generators, then reduce temperature according to a cooling schedule.
\end{enumerate}

This procedure mirrors the classical Metropolis algorithm and the foundational work on simulated annealing~\cite{Kirkpatrick1983, Aarts1988} for sampling thermal equilibria and escaping local minima.

\subsection{Cooling Schedules and Convergence}

The temperature \( T \) is gradually reduced using an annealing schedule, e.g.,
\[
T(t) = T_0 \cdot \alpha^t, \quad \text{with } 0 < \alpha < 1.
\]
Slower cooling (\( \alpha \to 1 \)) increases the likelihood of convergence to a global minimum but requires longer computation time. Theoretically, if the system explores the configuration space ergodically and the cooling is sufficiently slow (e.g., logarithmic), convergence to the global minimum is guaranteed \cite{Kirkpatrick1983}.

\subsection{Physical Interpretation and Entropy Flow}

This thermodynamic process can be interpreted as the system shedding entropy over time: high-energy, disordered configurations are sampled early, while structured, low-energy (CVT-like) configurations are stabilized as temperature drops. This parallels entropy decay in physical systems undergoing relaxation, and aligns with the coefficient entropy dynamics discussed in prior work on geometric refinement transforms.

\begin{remark}
The annealing process is particularly effective in breaking symmetry-induced degeneracies, where multiple local CVTs exist with near-equal energy. The added thermal noise allows the system to traverse energy barriers that would trap gradient-based algorithms such as Lloyd’s method.
\end{remark}

\section{Lattice Anchoring and Variable Annealing for Metastable CVT Mapping}

While slow annealing schedules bias the system toward global minimizers of the electrostatic potential, we propose that systematically varying the annealing rate can expose the full set of local minima corresponding to metastable CVT states. 

\subsection{Energy Depth and Annealing Timescale}

We posit that the time scale \( \tau \) over which the system is annealed determines which minima are accessible. Specifically, if a local minimum has energy gap \( \Delta E = E_{\text{local}} - E_{\text{global}} \), then the approximate annealing time scale required to trap the system in that configuration scales inversely with the gap:
\[
\tau \propto \frac{1}{\Delta E}.
\]
This relationship enables us to probe shallower minima using faster cooling schedules and deeper minima using slower schedules. Thus, by sweeping over a range of annealing time scales, we can construct a catalog of distinct local minima.

\subsection{Lattice Anchoring of Local Minima}

Each local minimum discovered via annealing can be associated with a minimal \emph{lattice anchor}: a structured continuation of the interior point configuration into the surrounding space. This continuation serves as a stabilizing boundary that deterministically selects a particular interior configuration upon CVT optimization.

We define the \emph{lattice anchor} \( A_k \) as the smallest symmetric or periodic extension of the interior structure such that fixing \( A_k \) and optimizing the configuration of points within \( \Omega \) recovers the same local minimum. In physical terms, the lattice anchor functions as a crystalline scaffold—extending the regularity of the interior and enforcing its geometry through electrostatic interaction.

\begin{remark}
In the limit of an infinite periodic extension of the interior lattice structure, we conjecture that the electrostatic potential is minimized \emph{uniquely} by the original lattice arrangement. This idea is supported by classical results in one dimension, where perturbations to a uniform lattice of charges return to equilibrium under Coulomb interactions~\cite{BorodachovHardinSaff}. While a complete proof in higher dimensions remains open, physical intuition and numerical evidence suggest that infinite lattice anchoring ensures unique recovery of the original configuration.
\end{remark}

This interpretation connects the annealing landscape to the broader theory of hyperuniform systems and lattice stability, positioning infinite periodic boundary conditions as a tool for controlling and selecting interior CVT minima.

\subsection{LAAM Method}

We summarize this combined methodology as the \textbf{Lattice-Anchored Annealing Mapping (LAAM)} method:

\begin{enumerate}
    \item Sweep over a range of annealing rates \( \{\tau_k\} \) and record final configurations.
    \item Identify distinct local minima via clustering in configuration and energy space.
    \item For each minimum, construct a minimal fixed lattice anchor that reproduces the state.
    \item Organize the resulting space as a set of CVT-lattice pairs \( \{ (x_i^{(k)}, A_k) \} \), where \( A_k \) is the corresponding anchor.
\end{enumerate}

This approach systematically catalogs the metastable landscape of CVT-like structures and provides insight into their symmetry, stability, and geometric origins.

\begin{remark}
This method has potential applications in solid-state modeling, phase transition analysis, adaptive meshing, and biological structure formation. In particular, the LAAM method allows one to explore and control metastable states in a principled geometric framework.
\end{remark}

\subsection{Outlook and Algorithmic Implications}

This framework provides a physically grounded algorithm for computing high-quality CVTs in arbitrary domains. Moreover, it opens doors to hybrid schemes—where deterministic CVT refinements (e.g., Lloyd’s algorithm) are embedded within probabilistic annealing loops to balance speed and global convergence. Future work may explore adaptive temperature control, anisotropic perturbation kernels, and incorporation of domain-specific constraints (e.g., anatomical structures in medical imaging).

\section{Conclusion}
This work reframes Centroidal Voronoi Tessellations as the limiting configuration of a generalized Thomson-like electrostatics problem. By proving that CVTs are local minima of the electrostatic potential and establishing a variational equivalence between energy functionals, we lay a foundation for novel physical interpretations and algorithmic constructions of optimal tessellations. This perspective offers powerful tools for modeling spatial regularity, dose distributions, and physical transport processes in continuous media.

Future directions include extending this framework to non-convex or anisotropic domains, incorporating non-uniform boundary charge distributions, and leveraging the electrostatic interpretation to guide dynamic optimization algorithms for CVT computation in high-dimensional or physically constrained settings. 

\bibliographystyle{plain}
\bibliography{references}

\end{document}